\documentclass[10pt]{amsart}
\usepackage{mathrsfs}
\usepackage{amsfonts}
\usepackage{color}
\usepackage{latexsym,amsmath,amssymb}
\usepackage{hyperref}
\newtheorem{theorem}{Theorem}[section]
\newtheorem{lemma}[theorem]{Lemma}

\renewcommand{\thetheorem}{\arabic{section}.\arabic{theorem}}
\theoremstyle{definition}

\theoremstyle{remark}

\numberwithin{equation}{section}
\begin{document}
\title[Classification results of Liouville equations]{ Classification results of Liouville equations and rigidity of Riemannian surfaces}

\author{Qianzhong Ou}
\address{ Department of Mathematics\\
          Yunnan Normal University\\
          Kunming, Yunna, 650500, P.R. China\\
          Yunnan Key Laboratory of Modern Analytical Mathematics and Applications, Kunming, China.}
\email{ouqzh@163.com}


\maketitle

\begin{abstract}
 We study  the Liouville equation $\triangle u+e^{2u} =0$ in a Riemannian surface $(M, g)$ with nonnegative $Ricci$ curvature.
 Under some asymptotic lower bound assumptions, we classify all the solutions to this equation, meanwhile we obtain the rigidity results for the ambient manifold.
 Note that our assumptions are optimal in some sense and differ from the classical assumption of finite total curvature.

\end{abstract}

\noindent {\bfseries Key words:}\quad Liouville equation, Classification, Rigidity for manifolds

\noindent {\bfseries Mathematics Subject Classification (2020):}\quad  35J91, 35B33, 58J05, 53C21


\section{Introduction}

\setcounter{equation}{0}

 Consider the following Liouville equation

\begin{equation}\label{1.1}
  \triangle u +e^{2u}=0         \qquad \text{in} \quad (\mathbb{R}^2,g_0),
\end{equation}
where ($\mathbb{R}^2,g_0$) denotes the Euclidean plane with the standard flat Euclidean metric $g_0$ which may be omitted without confusion.
  This equation arises from the study of conformal metrics $e^{2u}g_0$ which has constant Gauss curvature 1.
This problem goes back to the work of J. Liouville \cite{Lio1853} who found that solutions to (\ref{1.1}) can be represented
by some meromorphic functions on the complex plane $\mathbb{C}$.

It is well known that if there is no any further assumption on the solutions, there are infinitely many solutions of (\ref{1.1}).
 Then, a classical result was given by the seminal work \cite{CL1991} of Chen and Li, where they classified all solutions of (\ref{1.1})
 under the assumption of finite total curvature, i.e., $\int_{\mathbb{R}^2}e^{2u}< \infty$. Precisely, they showed in \cite{CL1991} that any solution of (\ref{1.1})
with finite total curvature must be given by

\begin{equation}\label{1.2}
  u(x)=-\ln(a+b|x-x_0|^2)
\end{equation}
 for some $a,b>0$ with $4ab=1$ and $x_0\in \mathbb{R}^2$.

 In this paper, we are concerned about classification results to this Lioville equation without the assumption of finite total curvature.
 Moreover, we shall extend the classification results to the case of Riemannian surface and at the same time, obtain the rigidity of the underlying manifold.
 More precisely, we have the following theorem.

\begin{theorem}\label{Thm1}
Let $(M,g)$ ) be a complete, connected, non-compact, boundaryless Riemannian manifold of dimension 2 and
with nonnegative $Ricci$ curvature. Let $F(t)$ be any positive nondecreasing function satisfying
            $\int_c^{+\infty}\frac{dt}{tF(t)}=+\infty$
for some $c>1$. Denotes by $r=r(x)$ the geodesic distance from a fixed reference point of $M$. If $u\in C^3(M)$ is a solution to

\begin{equation}\label{1.3}
  \triangle u +e^{2u}=0  \qquad \text{in} \quad M,
\end{equation}
such that

 \begin{equation}\label{1.4}
  u(x)\geqslant -2\ln \big[ r(x)F^{\alpha}(r(x))\big]
\end{equation}
for $r(x)>c$ and  some arbitrary $\alpha\geqslant 0$, then $(M,g)$ must be isometric to $(\mathbb{R}^2, g_0)$ and $u$ is given by (\ref{1.2}).

\end{theorem}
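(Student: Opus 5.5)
The approach is an integral (Obata/Serrin type) identity showing that the traceless Hessian of $w:=e^{-u}$ vanishes. Once that is known, both the rigidity of $(M,g)$ and the explicit form (\ref{1.2}) follow.

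\textbf{Step 1: the pointwise identity.} Set $w=e^{-u}>0$. From (\ref{1.3}) we get $\triangle w = w|\nabla u|^2 + e^{-u}e^{2u} = \frac{|\nabla w|^2+1}{w}$, that is,
\begin{equation*}
w\triangle w = |\nabla w|^2+1 .
\end{equation*}
For the explicit solutions, $w=a+b|x-x_0|^2$ satisfies $\nabla^2 w = 2b\,g$. Accordingly, define the traceless tensor
\begin{equation*}
E=\nabla^2 w-\tfrac12 \triangle w\, g .
\end{equation*}
Using the Bochner formula in dimension $2$ (where $\mathrm{Ric}=K g$) together with the equation for $w$, I will compute the divergence of a vector field of the form $X = w^{\beta}\big(E(\nabla w,\cdot)\big)^{\sharp}$, equivalently $w^\beta\big(\nabla |\nabla w|^2 - \triangle w\,\nabla w\big)$, with $\beta$ chosen so that cross terms cancel. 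The aim is
\begin{equation*}
\mathrm{div}\,X = w^{\beta}\big(|E|^2 + K|\nabla w|^2\big) \geqslant w^{\beta}|E|^2 .
\end{equation*}
The key input is that $\nabla(w\triangle w - |\nabla w|^2)=0$. This makes $\mathrm{div}\,E = \frac12\nabla\triangle w + K\nabla w$ expressible through $E(\nabla w,\cdot)/w$, and it fixes $\beta$; I expect $\beta=-1$ or $-2$.

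\textbf{Step 2: cutoff argument.} This is the main obstacle. Multiply by $\eta^{2}$, where $\eta$ is a cutoff adapted to $F$, integrate, and use Cauchy--Schwarz:
\begin{equation*}
\int \eta^2 w^\beta |E|^2 \leqslant C\int w^{\beta}|E|\,|\nabla w|\,\eta|\nabla\eta| \leqslant \tfrac12\int \eta^2 w^\beta|E|^2 + C\int w^{\beta}|\nabla w|^2 |\nabla \eta|^2 .
\end{equation*}
The remaining term must be bounded. The identity $|\nabla w|^2 \leqslant w\triangle w$ and the hypothesis (\ref{1.4}), which reads $w\leqslant r^2F^{2\alpha}(r)$, control the integrand via $|\nabla w|^2/w^{2}\leqslant \triangle w/w$, after a further integration by parts moving $\triangle$ onto $\eta^2$. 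Bishop--Gromov gives $\mathrm{Vol}(B_R)\leqslant \pi R^2$.

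The natural cutoff is $\eta = \phi\big(\int_c^{r}\frac{dt}{tF(t)}\big)$, i.e.\ $\eta$ is a log-type cutoff in the variable $G(r)=\int_c^r\frac{dt}{tF(t)}$. Then $|\nabla\eta|\lesssim \frac{1}{rF(r)\,G(R)}$, and the annular error is about $\frac{1}{G(R)}\cdot G(R)$ or better. To improve it, I will use powers: $\eta^{2m}$ with $m$ large, and Hölder, absorbing the $F^{2\alpha}$ growth so that any $\alpha\geqslant0$ is allowed. I expect the precise exponent $\beta$ (possibly replaced by a weight $w^{\beta}$ times a power of $\triangle w$) to be tuned here so that the error is $O\big(G(R)^{-1}\big)\to 0$, using $\int_c^\infty \frac{dt}{tF(t)}=\infty$. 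If the naive choice fails, I would first try to obtain an a priori bound $\triangle w/w \lesssim$ (something integrable against $|\nabla\eta|^2$) via the maximum principle / Cheng--Yau gradient estimate applied to $u$ on balls, using $\mathrm{Ric}\geqslant0$. The conclusion of this step is $E\equiv0$, and also $K|\nabla w|^2\equiv0$.

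\textbf{Step 3: rigidity and classification.} With $E=0$, $\nabla^2 w = \frac12\triangle w\, g$, and $\frac12\triangle w = \frac{|\nabla w|^2+1}{2w}$. Differentiating $E=0$ and using $\mathrm{div}\,E=0$ gives $\nabla\triangle w = -2K\nabla w$. From Step 1, $K\nabla w=0$ on the open set where $\nabla w\ne0$, so $\triangle w$ is locally constant there and hence constant on $M$ by continuity and connectedness. Write this constant as $2b>0$, so that $\nabla^2 w = 2b\, g$. A function on a complete manifold with Hessian a positive constant multiple of the metric has a unique critical point $x_0$ and is a strictly convex exhaustion. By Tashiro's theorem (or by integrating along geodesics from $x_0$), $(M,g)$ is isometric to $(\mathbb{R}^2,g_0)$ and
\begin{equation*}
w = a + b|x-x_0|^2 .
\end{equation*}
Finally, plugging into $w\triangle w=|\nabla w|^2+1$ gives $4ab=1$, which yields (\ref{1.2}).
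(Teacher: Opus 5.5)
Your Steps 1 and 3 are sound, but Step 2, which carries the whole content of the theorem for $\alpha>\frac12$, does not go through as proposed.

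\textbf{Steps 1 and 3.} With $\beta=-1$ your identity holds, and it is the paper's (2.9). Since $\nabla(\triangle w)=2w^{-1}E(\nabla w)$, your field is $X=w^{-1}E(\nabla w)=\frac12\nabla P$ with $P:=\triangle w$. So Step 1 says exactly that $\triangle P=2w^{-1}(|E|^2+K|\nabla w|^2)\geqslant 0$. In Step 3 there is a factor slip: $\triangle w=2b$ gives $\nabla^2 w=b\,g$, not $2b\,g$. Take $\triangle w=4b$ instead, and Tashiro's theorem then gives the rest.

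\textbf{Why Step 2 fails for $\alpha>\frac12$.} With $X=\frac12\nabla P$, the cutoff argument leaves the error $\int w^{-1}|\nabla w|^2|\nabla\eta|^2\leqslant\int P|\nabla\eta|^2$. The only control available on $P$ comes from testing $\triangle w=P$ against $\eta^2$, which gives $\int_{B_r}P\leqslant \frac{4}{r^2}\int_{B_{2r}}w\lesssim r^2F^{2\alpha}(2r)$. Feeding this into your $G$-adapted cutoff, or into a Karp-type dyadic argument, yields $E\equiv 0$ provided $\int^\infty\frac{dt}{tF^{2\alpha}(t)}=\infty$. The hypothesis guarantees this only for $\alpha\leqslant\frac12$, which is the Ciraolo--Farina--Polvara case.

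For example, take $F(t)=\ln t$ and $\alpha=1$. Then your bound on the error is of order $\ln R/(\ln\ln R)^2\to\infty$. Replacing $\eta$ by $\eta^{2m}$ does not change this growth. H\"older can only move you to $P^q$ with $q<1$, and that helps only if $P^q$ has its own divergence identity with nonnegative right-hand side. Subharmonicity of $P$ alone does not supply one. The maximum-principle/Cheng--Yau fallback is too unspecified to evaluate.

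\textbf{The missing idea.} $P$ is \emph{log}-subharmonic:
\[
\triangle\ln P=2w^{-2}P^{-2}|E|^2+2w^{-1}P^{-1}K|\nabla w|^2 \quad\text{(Lemma 2.1)}.
\]
This rests on two facts absent from your plan:
\begin{itemize}
\item for a traceless symmetric $2\times 2$ tensor, $|E(\nabla w)|^2=\frac12|E|^2|\nabla w|^2$, hence $|\nabla P|^2=2w^{-2}|E|^2|\nabla w|^2$;
\item $wP-|\nabla w|^2=1$.
\end{itemize}
Together they give, for every $q>0$,
\[
\mathrm{div}\big(P^{q-1}\nabla P\big)=2P^{q-2}w^{-2}|E|^2\big(1+q|\nabla w|^2\big)+2P^{q-1}w^{-1}K|\nabla w|^2\geqslant 0 .
\]
So the negative term $(q-1)P^{q-2}|\nabla P|^2$ is absorbed even when $q<1$.

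Now take $q=\frac{1}{2\alpha}$. H\"older and Bishop--Gromov turn $\int_{B_r}P\lesssim r^2F^{2\alpha}(2r)$ into $\int_{B_r}P^q\lesssim r^2F(2r)$. Karp's theorem (Theorem A) then forces $P$ to be constant; it can be proved by an $F$-adapted cutoff argument like yours, run on $P^q$. Hence $E\equiv 0$ and $K|\nabla w|^2\equiv 0$, and your Step 3 applies.

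Your remark about a weight involving a power of $\triangle w$ points at the right family, since $w^{-1}P^{q-1}E(\nabla w)=\frac12P^{q-1}\nabla P$. Without the identity above, though, Step 2 as proposed proves the theorem only for $\alpha\leqslant\frac12$.
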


 Note that the above result is optimal in the sense that $\alpha\geqslant 0$ is arbitrarily while for any $\beta>2$,
 there is a non flat Riemannian manifold ($M,g$) with nonnegative $Ricci$ curvature which admits solutions to the
 Liouville equation  (\ref{1.3}) satisfying $u(x)\geqslant -\beta\ln \big[ r(x)\big]$ for $r(x)$ large (the explicit examples are provided
on pp.34 of \cite{CL2024} ). Note also that the classification result is new even in the case  ($M,g$)=($\mathbb{R}^2,g_0$).

Recently, there appeared some interesting results  on the classifications of solutions to
 (\ref{1.1}) and (\ref{1.3}) without the assumption of finite total curvature.
 Ciraolo-Farina-Polvara \cite{CFP2024} give the same results of theorem \ref{Thm1} under the assumption (\ref{1.4}) with $\alpha=\frac{1}{2}$.
Eremenko-Gui-Li-Xu \cite{EGLX2022} give a complete classification of solutions to (\ref{1.1}) which are bounded from above.
Other classification results are also given in \cite{EGLX2022} without the assumption $\int_{M}e^{2u}dg< \infty$,
including results under the asymptotic upper bound assumption or the concavity assumptiom.
For a good survey of classification results for solutions to the Liouville equation, one can see the reference \cite{CaiL2024} by Cai and Lai.

We would like to point out that the asymptotic lower bound assumptions analogous to (\ref{1.4}) have appeared in \cite{CM2022} and \cite{CL2024},
although they both preserved the finite total curvature assumption $\int_{M}e^{2u}dg< \infty$.
The asymptotic lower bound assumption  seams interesting and reasonable.
As already noted, in (\ref{1.4}), the coefficient $-2$ is optimal.
Recall that, the analogous asymptotic assumption $u(x)\sim O(|x|^{2-n})$ ($|x|\rightarrow +\infty$) was posed in the pioneer work \cite{GNN1979}
by Gidas-Ni-Nirenberg, in studying the higher dimensional Yamabe equation in $\mathbb{R}^n$ ($n\geqslant 3$).
One may also notice that  $u(x)=-2\ln r(x) $ and $u(x)=|x|^{2-n} $
are the fundamental solutions of $\triangle u=0$ in $\mathbb{R}^2$ and in $\mathbb{R}^n$ ($n\geqslant 3$), respectively. On the other hand,
Theorem \ref{Thm1} also implies a necessarily condition for the solvability of the equation (\ref{1.3}), that is,
any solution $u\in C^3(M)$ of (\ref{1.3}) must satisfies $\liminf_{r(x)\rightarrow +\infty}\frac{u(x)}{\ln r(x)}\leqslant -2$.
Since all solutions of (\ref{1.1}) bounded from above are completely  understood by Eremenko-Gui-Li-Xu \cite{EGLX2022},
this necessarily condition also implies that any unknown solution of (\ref{1.1}) must be neither bounded form above nor bounded from below.

Since there are infinitely many solutions of (\ref{1.1}) and the classification problem is very complicate and challenging and
 is far from completely understood, various assumptions on the solutions have been proposed.  Although the assumption  (\ref{1.4})
 is optimal as mentioned before, with a weaken version of it, we can still obtain a new classification result as follows.

\begin{theorem}\label{Thm2}
Let $(M,g)$ be as in Theorem \ref{Thm1}. If $u\in C^3(M)$ is a solution to (\ref{1.3}) such that

 \begin{equation}\label{1.5}
  u(x)\geqslant -\beta\ln \big[ r(x)\big]
\end{equation}
for $r(x)>c$ and some arbitrary $\beta> 2$, then $(M,g)$ must be conformal to $(\mathbb{R}^2, g_0)$.

\end{theorem}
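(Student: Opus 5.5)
The plan is to combine the classical structure theory of complete surfaces with nonnegative curvature with a one-dimensional averaging argument. Since $\dim M=2$, $Ric\geqslant 0$ is the same as Gauss curvature $K\geqslant 0$. Two cases arise according to the topology of $M$, and only the non-simply-connected case uses the equation (\ref{1.3}) and the bound (\ref{1.5}).

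\emph{Case 1: $M$ simply connected.} By uniformization, $M$ is conformal either to the unit disc or to $\mathbb{C}$. A complete surface with $K\geqslant 0$ has quadratic area growth, $\mathrm{Vol}(B_R)\leqslant \pi R^2$ by Bishop--Gromov, and is therefore parabolic (Cheng--Yau). The disc carries nonconstant bounded harmonic functions, hence is not parabolic. So $M$ is conformal to $(\mathbb{R}^2,g_0)$.

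\emph{Case 2: $M$ not simply connected.} By the Cheeger--Gromoll soul theorem (Cohn-Vossen in dimension two), the soul is a closed geodesic and $M$ is flat. Thus $M$ is isometric either to a flat cylinder $S^1_L\times\mathbb{R}$ or to a flat open M\"obius band. In the M\"obius case I pass to the orientable double cover $\tilde M$, a flat cylinder, with the lifted solution $\tilde u$. The covering map is a local isometry that does not increase distances, so $r(\pi(\tilde x))\leqslant \tilde r(\tilde x)$ for a lifted base point, and (\ref{1.5}) still gives $\tilde u\geqslant -\beta\ln \tilde r$ for $\tilde r$ large. It therefore suffices to rule out the flat cylinder, which I do by contradiction.

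Use coordinates $(\theta,t)\in S^1_L\times\mathbb{R}$. Then $r(x)\leqslant |t|+C$, so (\ref{1.5}) gives $u(\theta,t)\geqslant -\beta\ln(|t|+C)$ for $|t|$ large. Define the circle average
\begin{equation*}
\bar u(t)=\frac1L\int_{S^1_L}u(\theta,t)\,d\theta .
\end{equation*}
Integrating (\ref{1.3}) over each circle and using periodicity in $\theta$ gives $\bar u''(t)=-\frac1L\int e^{2u}\,d\theta$. By Jensen's inequality this yields
\begin{equation*}
\bar u''(t)\leqslant -e^{2\bar u(t)}<0 \qquad\text{for all } t\in\mathbb{R}.
\end{equation*}
Hence $\bar u'$ is strictly decreasing on all of $\mathbb{R}$. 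Choose any $t_0$. If $\bar u'(t_0)\leqslant 0$, then $\bar u'(t)\leqslant \bar u'(t_0+1)=:-\delta<0$ for all $t\geqslant t_0+1$. If $\bar u'(t_0)>0$, then $\bar u'(t)\geqslant \bar u'(t_0-1)>\bar u'(t_0)>0$ for all $t\leqslant t_0-1$. In either case $\bar u$ decays at least linearly in $|t|$ along one end.

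On the other hand, averaging the lower bound over circles gives $\bar u(t)\geqslant -\beta\ln(|t|+C)$ for $|t|$ large. Linear decay contradicts this logarithmic lower bound. So the cylinder, and hence the M\"obius band, is impossible, and only Case 1 remains. This proves that $(M,g)$ is conformal to $(\mathbb{R}^2,g_0)$.

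\emph{Expected difficulty.} The differential-inequality argument is elementary. I expect the main care to be needed in the structural input: invoking the soul theorem correctly for non-simply-connected complete surfaces with $K\geqslant 0$, the parabolicity in Case 1, and the distance comparison $r\leqslant |t|+C$ (including under the double cover in the M\"obius case). Note that the argument uses only that $\beta$ is finite, which is consistent with the fact that for $\beta>2$ one cannot expect isometry to $\mathbb{R}^2$, only conformality.
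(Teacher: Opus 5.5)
Your proof is correct, but at the decisive step it takes a genuinely different route from the paper.

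Both arguments use classical structure theory to reduce to excluding a flat cylinder. The paper cites Cohn-Vossen and Huber for this; you use uniformization, Cheng--Yau parabolicity and the soul theorem.

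The paper excludes the cylinder with the machinery of Theorem \ref{Thm1}. Linear area growth of the cylinder, the bound $v=e^{-u}\leqslant r^{\beta}$ and the cutoff estimate (\ref{2.15}) give $\int_{B_r}P^{1/\beta}\leqslant Cr^{2-2/\beta}=o(r^2)$. Theorem \ref{Thm3} (with $F\equiv 1$), together with $\triangle \ln P\geqslant 0$ from Lemma \ref{lem-1}, then forces $P$ to be constant. Hence $E\equiv 0$, which the cylinder cannot support.

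You bypass the $P$-function, Lemma \ref{lem-1} and Karp's theorem altogether. Averaging the equation over the closed geodesics $S^1_L\times\{t\}$ and applying Jensen gives $\bar u''\leqslant -e^{2\bar u}<0$. Strict concavity of $\bar u$ on all of $\mathbb{R}$ then forces linear decay on at least one end, contradicting the logarithmic lower bound.

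Your route is more elementary and proves more. It excludes the cylinder under any sublinear lower bound $u\geqslant -o(r)$, whereas the paper's H\"older estimate, as written, relies on the polynomial bound $v\leqslant r^{\beta}$. You also treat the flat open M\"obius band through the orientable double cover. The paper's dichotomy passes over this case, even though orientability is not assumed. In exchange, the paper's route runs both theorems through one uniform mechanism.

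Two small points deserve a sentence each in a write-up.
\begin{itemize}
\item Harmonic functions are conformally invariant in dimension two, so $\mathrm{Re}\,z$ on the disc is bounded and $g$-harmonic.
\item For the double cover you need the reverse comparison $\tilde r\leqslant r\circ\pi+C$. This guarantees $r(\pi(\tilde x))>c$ when $\tilde r(\tilde x)$ is large, so that (\ref{1.5}) actually applies.
\end{itemize}
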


Under the assumption (\ref{1.5}) with $\beta=3$, the result of Theorem \ref{Thm2} was also obtained by \cite{CFP2024}.
Once again, the result here is optimal in the sense that $\beta> 2$ is arbitrarily while there are actually
complete manifolds $(M, g)$ of dimension $2$ with nonnegative $Ricci$ curvature, conformal to $\mathbb{R}^2$ with
the Euclidean metric which admits solutions $u(x)$ to the Liouville equation (\ref{1.3}) verifying
$u(x)\sim -\beta\ln \big[ r(x)\big]$ as $r(x)\rightarrow \infty$ for any  $\beta>2$(see also pp.34 of \cite{CL2024} for the explicit examples ).

We will prove Theorem \ref{Thm1} and Theorem \ref{Thm2} through a strategy adopted by \cite{CFP2024}, except with an alternative
theorem provided in \cite{K1982} as follows.

\setcounter{theorem}{0}
\renewcommand{\thetheorem}{\Alph{theorem}}

\begin{theorem}\label{Thm3}{\bf (Theorem 2.4 in \cite{K1982})}
If $P$ is a $C^2$ nonconstant positive solution of $\triangle \ln P \geqslant 0$ on a complete noncompact Riemannian manifold $(M^n,g)$
then for any $q>0$, $x_0\in M^n$, and function $F(t)$ as in theorem \ref{Thm1} we have

\begin{equation}\label{1.6}
  \limsup_{r\rightarrow \infty} \frac{1}{r^2F(r)}\int_{B_r(x_0)} P^q dg=+\infty,
\end{equation}
where $B_r(x_0)$ denote the geodesic balls in $(M^n,g)$.

\end{theorem}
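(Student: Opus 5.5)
The plan is the classical Karp argument: a weighted mean-value/Caccioppoli-type inequality for the subharmonic function $w=\ln P$, combined with the volume-growth integral test $\int^\infty dt/(tF(t))=\infty$. We argue by contradiction. Suppose that for some $q>0$ and $x_0$ we have $\int_{B_r(x_0)}P^q\,dg\leqslant C r^2F(r)$ for all large $r$. We show that this forces $\nabla\ln P\equiv 0$, i.e. $P$ constant.

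\textbf{Step 1 (integral inequality).} Set $w=\ln P$, so $\triangle w\geqslant 0$ and $P^q=e^{qw}$. Take a Lipschitz cutoff $\phi$ with compact support and test $\triangle w\geqslant0$ against $\phi^2 e^{qw}$. Integrating by parts gives
\[
q\int \phi^2 e^{qw}|\nabla w|^2\leqslant -2\int \phi e^{qw}\langle\nabla\phi,\nabla w\rangle .
\]
By Cauchy--Schwarz,
\[
\Big(\int \phi^2 e^{qw}|\nabla w|^2\Big)\leqslant \frac{2}{q}\Big(\int_{\operatorname{supp}\nabla\phi} \phi^2e^{qw}|\nabla w|^2\Big)^{1/2}\Big(\int |\nabla\phi|^2 e^{qw}\Big)^{1/2}.
\]
Write $I(r)=\int_{B_r}e^{qw}|\nabla w|^2$ and $V(r)=\int_{B_r}e^{qw}$. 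Completeness makes $r$ Lipschitz with $|\nabla r|\leqslant1$, so the coarea formula applies.

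\textbf{Step 2 (differential inequality).} Choose $\phi$ radial, equal to $1$ on $B_{r_1}$, with $\phi'$ supported in $[r_1,r_2]$. Optimizing $\phi$ over all such radial profiles, as in Karp's and Lyons--Sullivan-type arguments, gives
\[
I(r_1)^2\leqslant \frac{4}{q^2}\big(I(r_2)-I(r_1)\big)\Big(\int_{r_1}^{r_2}\frac{dt}{V'(t)}\Big)^{-1}.
\]
Here $V'(t)=\int_{\partial B_t}e^{qw}$ is defined for a.e. $t$. If $I\not\equiv0$, pick $r_0$ with $I(r_0)>0$. The inequality then integrates to
\[
\frac{1}{I(r_0)}\geqslant \frac{1}{I(r_0)}-\frac{1}{I(R)}\geqslant \frac{q^2}{4}\int_{r_0}^{R}\frac{dt}{V'(t)} .
\]
So the argument reduces to showing that $\int^\infty dt/V'(t)=\infty$ under the growth hypothesis.

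\textbf{Step 3 (main obstacle).} We must turn $V(r)\leqslant Cr^2F(r)$ into divergence of $\int dt/V'(t)$. This is a Cauchy--Schwarz/dyadic argument. On $[r,2r]$,
\[
(r)^2\leqslant \int_r^{2r}V'\cdot\int_r^{2r}\frac{dt}{V'}\leqslant V(2r)\int_r^{2r}\frac{dt}{V'},
\]
so
\[
\int_r^{2r}\frac{dt}{V'}\geqslant \frac{r^2}{V(2r)}\geqslant \frac{c}{F(2r)}.
\]
Summing over dyadic blocks $r=2^k$ gives $\sum_k 1/F(2^{k+1})$. Since $F$ is nondecreasing, this sum is comparable to $\int^\infty dt/(tF(t))=\infty$. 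I expect the real care to go into the a.e. differentiability of $V$ and into justifying the optimal-cutoff step. Using the dyadic bound directly on the discrete version of Step 2, with piecewise-linear $\phi$ on successive annuli, avoids the need for $V'$ to behave well.

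\textbf{Step 4 (conclusion).} Divergence forces $I\equiv0$, so $\nabla w=0$ wherever $e^{qw}>0$, i.e. everywhere. Hence $P$ is constant, a contradiction. Therefore $\limsup_{r\to\infty} V(r)/(r^2F(r))=+\infty$ for every $q>0$, which is (\ref{1.6}).
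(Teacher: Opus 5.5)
The paper gives no proof of this statement. It is quoted from Karp (Theorem 2.4 of his 1982 paper) and used as a black box, so your argument is an independent, self-contained route, and it is correct. Your steps are:
\begin{itemize}
\item test $\triangle w\geqslant 0$ against $\phi^2e^{qw}$;
\item optimise the radial cutoff to get $I(r_1)^2\int_{r_1}^{r_2}dt/V'\leqslant \frac{4}{q^2}\big(I(r_2)-I(r_1)\big)$;
\item deduce that $P$ is constant whenever $\int^{\infty}dt/V'(t)=\infty$;
\item obtain this divergence from $V(r)\leqslant Cr^2F(r)$ by Cauchy--Schwarz on dyadic intervals, where only the easy direction $\sum_k 1/F(2^{k+1})\geqslant \frac{1}{\ln 2}\int^{\infty}\frac{dt}{tF(t)}$ is needed.
\end{itemize}
This is the standard capacity-type (weighted parabolicity) argument. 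Compared with the citation, it uses only $\triangle\ln P\geqslant 0$ and completeness. It also proves more: $P$ is constant as soon as $\int^{\infty}dt/V'(t)=\infty$ with $V(t)=\int_{B_t}P^q$. The citation buys only brevity. Equivalently, you could run the usual $L^2$ Caccioppoli argument on the nonnegative subharmonic function $P^{q/2}$; working with $w$ directly only improves a constant.

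Two technical points.

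First, the step ``the inequality then integrates to'' is valid but deserves a line. $I$ is continuous because the spheres $\{r=t\}$ are null sets. Apply your two-radius inequality on a partition $r_0=t_0<\dots<t_N=R$ and write $\frac{I(t_{k+1})-I(t_k)}{I(t_k)^2}=\frac{I(t_{k+1})}{I(t_k)}\big(\frac{1}{I(t_k)}-\frac{1}{I(t_{k+1})}\big)$. The factor $\max_k I(t_{k+1})/I(t_k)$ tends to $1$ as the mesh shrinks, which gives your bound. The optimal cutoff should be built from $(V'+\varepsilon)^{-1}$, with $\varepsilon\to 0$ taken at the end.

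Second, your closing suggestion does not work as stated. With fixed dyadic annuli you only obtain $I_{k+1}-I_k\geqslant a_kI_k^2$ with $a_k\asymp 1/F(2^{k+1})$. Such a recursion alone yields no contradiction: for example, $I_{k+1}=I_k+I_k^2$ with $I_0=1$ satisfies it with $a_k\equiv 1$ and stays finite. To close the discrete version you also need the a priori bound $I(r)\leqslant \frac{4}{q^2r^2}V(2r)\leqslant C'F(2r)$, which comes from the same test function and gives $a_k\lesssim 1/I_k$. Then the sum of $a_k$ over indices with $I_{k+1}>2I_k$ is dominated by a geometric series. The remaining indices are controlled by telescoping $1/I_k$, since there $a_k\leqslant 2\big(\frac{1}{I_k}-\frac{1}{I_{k+1}}\big)$.
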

We would like to point out that the key idea of this strategy is the so called $P$-function method, which may first due to Weinberger \cite{We1971}
and later applied by Wang \cite{W2022}. This approach  was also adopted very recently by Sun-Wang \cite{SW2025}
to deal with a higher dimensional quasilinear Liouville equation. One of the key steps of our proofs is  verifying
the condition $\triangle \ln P \geqslant 0$ in Theorem \ref{Thm3}, which will be given in section 2 (see Lemma \ref{lem-1}).
Then the proofs of Theorem \ref{Thm1} and Theorem \ref{Thm2} shall be presented following Lemma \ref{lem-1} in the same section.

\section{Proofs of Theorem \ref{Thm1} and Theorem \ref{Thm2}}

\setcounter{equation}{0}
\setcounter{theorem}{0}
\renewcommand{\thetheorem}{\arabic{section}.\arabic{theorem}}

In this section, we will prove Theorem \ref{Thm1} and Theorem \ref{Thm2}.
To do this, we first present a preparation lemma which is a key step to our proofs.

Taking $v=e^{-u}$ for $u$ being any solution of the Liouville equation (\ref{1.3}), then

\begin{equation}\label{2.1}
  \triangle v= |\nabla v|^2v^{-1}+v^{-1} \qquad \text{in} \quad M.
\end{equation}
Denote $P:= |\nabla v|^2v^{-1}+v^{-1}$, then $\triangle v=P$. Denote the vector fields
$E_{ij}= v_{,ij}-\frac{\triangle v}{2}g_{ij}$. Then clearly the  matrix $E=\{E_{ij}\}$ is symmetric and trace free, i.e., $\text{\bf Tr}_g E\equiv 0$.
Here and in the sequel, we would indicate the covariant derivatives of functions or vector fields with indices preceded by a
comma, to avoid confusion, and, the summation convention for repeated indices are used.
For the matrix $g=\{g_{ij}\}$, we denote $g^{-1}=\{g^{ij}\}$, the inver of $g$, and we also use them to raise or lower the indices.
 For examples, $|\nabla v|^2=v^{,i}v_{,i}=g^{ij}v_{,i}v_{,j}$ and $|E|^2=E_{ij}E^{ij}=g^{ik}E_{ij}E_{kl}g^{lj}$.

 Now we have the following key lemma for the proofs of the main theorems.

 \begin{lemma}\label{lem-1}
With the notations as in above, then  we have
\begin{equation}\label{2.5}
 \bigtriangleup(\ln P)=2v^{-2}P^{-2}|E|^2+2v^{-1}P^{-1}\text{Ric}(\nabla v,\nabla v).
\end{equation}
\end{lemma}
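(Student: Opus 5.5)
The plan is a direct Bochner-type computation. I will write $\triangle \ln P = P^{-1}\triangle P - P^{-2}|\nabla P|^2$ and compute $\nabla P$ and $\triangle P$ from the defining relation $vP = |\nabla v|^2+1$.

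\emph{Gradient.} Differentiating $vP=|\nabla v|^2+1$ gives $P v_{,k}+vP_{,k}=2v^{,i}v_{,ik}$. Substituting $v_{,ik}=E_{ik}+\frac{P}{2}g_{ik}$ (recall $\triangle v=P$), this becomes
$$vP_{,k}=2E_{ik}v^{,i}.$$
This identity is the key simplification: $\nabla P$ is controlled entirely by the traceless part $E$.

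\emph{Laplacian.} Taking the divergence of $vP_{,k}=2E_{ik}v^{,i}$ gives
$$v\triangle P + v^{,k}P_{,k} = 2E_{ik,}{}^{k}v^{,i}+2E_{ik}v^{,ik}.$$
The terms are handled as follows.
\begin{itemize}
\item Since $E$ is trace free, $E_{ik}v^{,ik}=E_{ik}E^{ik}=|E|^2$.
\item By the Ricci identity $v_{,ik}{}^{k}=(\triangle v)_{,i}+R_{il}v^{,l}$, we get $E_{ik,}{}^{k}=P_{,i}+\mathrm{Ric}_{il}v^{,l}-\frac12 P_{,i}=\frac12P_{,i}+\mathrm{Ric}_{il}v^{,l}$. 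Hence $2E_{ik,}{}^{k}v^{,i}=v^{,i}P_{,i}+2\mathrm{Ric}(\nabla v,\nabla v)$.
\end{itemize}
The $v^{,k}P_{,k}$ terms cancel, leaving
$$v\triangle P=2|E|^2+2\mathrm{Ric}(\nabla v,\nabla v).$$

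\emph{Gradient term.} It remains to show that $|\nabla P|^2$ contributes nothing extra. We have $v^2|\nabla P|^2=4E_{ik}E_{jk}v^{,i}v^{,j}$. In dimension 2, a symmetric traceless matrix satisfies $E^2=\frac12|E|^2 g$, which gives
$$v^2|\nabla P|^2=2|E|^2|\nabla v|^2.$$

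\emph{Assembly.} Combining the pieces,
$$\triangle\ln P=\frac{2|E|^2+2\mathrm{Ric}(\nabla v,\nabla v)}{vP}-\frac{2|E|^2|\nabla v|^2}{v^2P^2}.$$
Using $|\nabla v|^2=vP-1$, the $|E|^2$ terms combine to $\frac{2|E|^2}{vP}-\frac{2|E|^2(vP-1)}{v^2P^2}=\frac{2|E|^2}{v^2P^2}$. This gives exactly \eqref{2.5}.

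The main obstacle I expect is getting the curvature sign and the factor in the Ricci commutation correct. The dimension-two identity $E^2=\frac12|E|^2g$ is what makes the gradient term cancel precisely. Note that regularity $u\in C^3$ makes $v$ a $C^3$ function, so $P\in C^2$ and all the derivatives used above exist.
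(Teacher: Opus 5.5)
Your proof is correct and follows the paper's argument. You use the same ingredients: the identities $vP_{,k}=2E_{ik}v^{,i}$ and $v\triangle P=2|E|^2+2\mathrm{Ric}(\nabla v,\nabla v)$, the two-dimensional cancellation $v^2|\nabla P|^2=2|E|^2|\nabla v|^2$, and the combination via $vP=|\nabla v|^2+1$. The only cosmetic differences are that you get $\triangle P$ by taking the divergence of the gradient identity and applying the Ricci commutation formula (the paper uses the product rule plus Bochner's formula), and you use $E^2=\frac12|E|^2g$ where the paper expands in components.
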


{\bf Proof}\quad  First we have

\begin{equation}\label{2.6}
  \triangle (\ln P)=P^{-1}\triangle P -P^{-2}|\nabla P|^2.
\end{equation}
Then we compute

\begin{equation}\label{2.7}
\begin{split}
\triangle P=& \triangle\big(|\nabla v|^2v^{-1}+v^{-1}\big)\\
           =& (|\nabla v|^2+1)\triangle\big(v^{-1}\big)+2\big< \nabla(v^{-1}),\nabla(|\nabla v|^2+1)\big>+v^{-1}\triangle \big(|\nabla v|^2+1\big)\\
           =& (|\nabla v|^2+1)\big(2v^{-3}|\nabla v|^2-v^{-2}\triangle v\big)+2\big< \nabla(v^{-1}),\nabla(vP)\big>+v^{-1}\triangle \big(|\nabla v|^2\big)\\
           =& P\big(2v^{-2}|\nabla v|^2-v^{-1}\triangle v\big)+2\big< -v^{-2}\nabla v,P\nabla v+v\nabla P\big>+v^{-1}\triangle \big(|\nabla v|^2\big)\\
           =& -v^{-1}P\triangle v-2v^{-1}\big< \nabla v,\nabla P\big>+v^{-1}\triangle \big(|\nabla v|^2\big),
\end{split}
\end{equation}
where $\langle\cdot, \cdot\rangle$ denotes the inner product with respect to $g$.
By the Bochner formula we have

\begin{equation}\label{2.8}
  \triangle \big(|\nabla v|^2\big)=2|\nabla^2 v|^2 +2\big< \nabla v,\nabla\triangle v\big>+ 2\text{Ric}(\nabla v,\nabla v).
\end{equation}
Plugging this into (\ref{2.7}) and using the equation $\triangle v=P$ we get

\begin{equation}\label{2.9}
\begin{split}
\triangle P=& 2v^{-1}|\nabla^2 v|^2 -v^{-1}P\triangle v+ 2v^{-1}\text{Ric}(\nabla v,\nabla v)\\
           =& 2v^{-1}|\nabla^2 v- \frac{\triangle v}{2}g|^2 + 2\text{Ric}(\nabla v,\nabla v).
\end{split}
\end{equation}
Inserting this into (\ref{2.6}) yields

\begin{equation}\label{2.10}
\begin{split}
\triangle (\ln P)=& 2v^{-1}P^{-1}|\nabla^2 v- \frac{\triangle v}{2}g|^2 -P^{-2}|\nabla P|^2+ 2v^{-1}P^{-1}\text{Ric}(\nabla v,\nabla v)\\
                 =& 2v^{-2}P^{-2}\big(|E|^2 + |\nabla v|^2|E|^2-\frac{1}{2}v^2|\nabla P|^2\big)+ 2v^{-1}P^{-1}\text{Ric}(\nabla v,\nabla v).
\end{split}
\end{equation}
A direct calculation will show that $|\nabla v|^2|E|^2-\frac{1}{2}v^2|\nabla P|^2\equiv 0$.
In fact, we may assume $g_{ij}=\delta_{ij}$ at any fixed point, then

\begin{equation}\label{2.11}
\begin{split}
\nabla  P=& \nabla\big(|\nabla v|^2v^{-1}+ v^{-1}\big)\\
         =& v^{-1}\nabla\big(|\nabla v|^2+ 1\big) +(|\nabla v|^2+ 1)\nabla\big(v^{-1}\big)\\
         =& 2v^{-1}\nabla v\otimes \nabla^2v -v^{-1}P\nabla v \\
         =& 2v^{-1}\nabla v\otimes (\nabla^2v -\frac{P}{2}g)\\
         =& 2v^{-1}\nabla v\otimes E,
\end{split}
\end{equation}
and hence

\begin{equation}\label{2.12}
\begin{split}
|\nabla v|^2|E|^2-\frac{1}{2}v^2|\nabla P|^2
=& |\nabla v|^2|E|^2 -2|\nabla v\otimes E|^2\\
=& \big((v_{,2})^2-(v_{,1})^2\big)(E_{11}-E_{22})(E_{11}+E_{22})\\
\,&-2v_{,1}v_{,2}E_{12}(E_{11}+E_{22})\\
=& 0.
\end{split}
\end{equation}
where in the last step we have used $\text{\bf Tr}_g E=E_{11}+E_{22} \equiv 0$. Now we have verified (\ref{2.5}) by (\ref{2.10}).\qed

Now we are at the point to prove Theorem \ref{Thm1} and Theorem \ref{Thm2}.

\vspace{0.6cm}
{\bf Proof of Theorem \ref{Thm1} }

Let $B_r(x_0)$ be geodesic balls in $(M^n,g)$ and $\eta\in C^2(M)$ be the standard cutoff functions such that

\begin{equation}\label{2.13}
\begin{cases}
                          \eta\equiv 1  &\verb"in" \,\,B_r(x_0),\\
                        0\leq\eta\leq1  &\verb"in" \,\,B_{2r}(x_0),\\
                          \eta\equiv 0  &\verb"in" \,\,M\backslash B_{2r}(x_0),\\
     |\nabla \eta|\leqslant \frac{1}{r}  &\verb"in" \,\,M.
\end{cases}
\end{equation}
Multiplying both sides of the equation (\ref{2.1}) by $\eta^2$  and integrating (by part) over $M$ we have

\begin{equation}\label{2.14}
\begin{split}
\int_M \eta^2 |\nabla v|^2v^{-1} + \int_M \eta^2 v^{-1 }
       = & \int_M \eta^2 \triangle v\\
       = & -2\int_M \eta\nabla\eta\cdot\nabla v \\
\leqslant & \frac{1}{2}\int_M \eta^2 |\nabla v|^2v^{-1} +2\int_M |\nabla\eta|^2 v,
\end{split}
\end{equation}
where the Young's inequality being used in the last step. Then we get

\begin{equation}\label{2.15}
\begin{split}
\int_{B_r(x_0)} P \leqslant & \int_M \eta^2\big(|\nabla v|^2v^{-1} + v^{-1}\big)\\
\leqslant & 4\int_M |\nabla\eta|^2 v\\
\leqslant &  \frac{4}{r^2}\int_{B_{2r}(x_0) \backslash B_r(x_0)}  v.
\end{split}
\end{equation}
From the assumption (\ref{1.4}) we have

\begin{equation}\label{2.16}
v(x)=e^{-u(x)}\leqslant r^2(x)F^{2\alpha}(r(x)) \quad \text{for}\quad r(x)>c.
\end{equation}
Then by (\ref{2.15})

\begin{equation}\label{2.17}
\int_{B_r(x_0)} P
\leqslant  16|B_{2r}(x_0)| F^{2\alpha}(2r)\quad \text{for}\quad r>c,
\end{equation}
where $|B_{2r}(x_0)|$ denotes the volume of $B_{2r}(x_0)$ with respect to $g$. Therefore, by H\"{o}lder inequality

\begin{equation}\label{2.18}
\begin{split}
\int_{B_r(x_0)} P^{\frac{1}{2\alpha}}
\leqslant & \big(\int_{B_r(x_0)} P\big)^{\frac{1}{2\alpha}} \big(\int_{B_r(x_0)} \big)^{\frac{2\alpha-1}{2\alpha}}\\
\leqslant & \big(16 |B_{2r}(x_0)| F^{2\alpha}(r(x))\big)^{\frac{1}{2\alpha}} |B_r(x_0)|^{\frac{2\alpha-1}{2\alpha}}\\
        = & 4^{\frac{1}{\alpha}} |B_{2r}(x_0)| F(2r),
\end{split}
\end{equation}
where we have assumed $2\alpha>1$ without loss generality. Since Ric$\geqslant 0$, the Bishop-Gromov relative volume comparison theorem implies
$|B_r(x_0)|\leqslant r^2D$, $\forall r>0$, where $D$ is the Euclidean area of the unit disk in $\mathbb{R}^2$. Then

\begin{equation}\label{2.19}
\frac{1}{r^2\widetilde{F}(r)}\int_{B_r(x_0)} P^{\frac{1}{2\alpha}}\leqslant 4^{\frac{1}{\alpha}+1} D,
\end{equation}
where $\widetilde{F}(r)=F(2r)$, which satisfies
$\int_c^{+\infty}\frac{dt}{t\widetilde{F}(t)}=\int_c^{+\infty}\frac{2dt}{2tF(2t)}=\int_{2c}^{+\infty}\frac{dt}{tF(t)}=+\infty$
by the assumption of $F$. Now by (\ref{2.19}) we have

\begin{equation}\label{2.20}
\limsup_{r\rightarrow \infty} \frac{1}{r^2\widetilde{F}(r)}\int_{B_r(x_0)} P^{\frac{1}{2\alpha}}\leqslant 4^{\frac{1}{\alpha}+1} D.
\end{equation}

On the other hand, clearly $P>0$ and if Ric$\geqslant 0$, by Lemma \ref{lem-1} we have $\triangle \ln P \geqslant 0$. We can thus apply Theorem \ref{Thm3}
to obtain $P=constant$. Then again, using Lemma \ref{lem-1} yields

 $$E\equiv 0 \,\, \text{i.e.}\,\,  \nabla^2 v\equiv \frac{P}{2}g \quad \text{and} \quad\text{Ric}(\nabla v,\nabla v)\equiv 0.$$

\noindent Then  $(M,g)$ must be the Euclidean plane $\mathbb{R}^2$ by a classical splitting theorem (see e.g. Theorem 5.7.4 in \cite{P2006} ),
and further more, $v$ is quadratic, that is, $u$ must be given by (\ref{1.2}). \qed

\vspace{0.6cm}

{\bf Proof of Theorem \ref{Thm2} }

Under the assumption of ($M,g$), the  classical results of Cohn-Vossen \cite{CV1935} and
Huber \cite{H1957}  imply  that either ($M,g$) is isometric to the flat cylinder $S^1\times \mathbb{R}$
or ($M,g$) is conformal to ($\mathbb{R}^2, g_0$). Next, we need only to  rule out the flat cylinder under the assumption (\ref{1.5}).
For contradiction, suppose that ($M,g$) is the flat cylinder, then ($M,g$) has linear volume growth, i.e., there exists some constant
$C>0$ such that $|B_r|\leqslant Cr$ , for large radii $r$. Then under the assumption (\ref{1.5}) we have

\begin{equation}\label{2.21}
v(x)=e^{-u(x)}\leqslant r^{\beta}(x) \quad \text{for}\quad r(x)>c,
\end{equation}
and by (\ref{2.15}),

\begin{equation}\label{2.22}
\int_{B_r(x_0)} P\leqslant \frac{4}{r^2}\int_{B_{2r}(x_0) \backslash B_r(x_0)}  v
\leqslant   \frac{4}{r^2}|B_{2r}(x_0)| (2r)^{\beta}\quad \text{for}\quad r>c.
\end{equation}
Now similar to (\ref{2.18}) we get

\begin{equation}\label{2.23}
\begin{split}
\int_{B_r(x_0)} P^{\frac{1}{\beta}}
\leqslant & \big(\int_{B_r(x_0)} P\big)^{\frac{1}{\beta}} \big(\int_{B_r(x_0)} \big)^{\frac{\beta-1}{\beta}}\\
\leqslant & \big(\frac{4}{r^2}|B_{2r}(x_0)| (2r)^{\beta}\big)^{\frac{1}{\beta}} |B_r(x_0)|^{\frac{\beta-1}{\beta}}\\
        = & 4^{\frac{1}{\beta}}r^{-\frac{2}{\beta}} (2r) |B_r(x_0)|\Big( \frac{|B_{2r}(x_0)|}{|B_r (x_0)|}\Big)^{\frac{1}{\beta}}\\
        = & C2^{1+\frac{3}{\beta}} r^{2-\frac{2}{\beta}}.
\end{split}
\end{equation}
Therefore

\begin{equation}\label{2.24}
\limsup_{r\rightarrow \infty} \frac{1}{r^2}\int_{B_r(x_0)} P^{\frac{1}{\beta}}\leqslant 0.
\end{equation}
As in the previous proof of Theorem \ref{Thm1}, we can conclude that $P$ must be constant
and hence $(M,g)$ must be isometric to ($\mathbb{R}^2, g_0$). A contradiction.\qed

\vspace{0.6cm}
{\it Acknowledgement.}
 Research of the author  was supported by National Natural Science Foundation of China (grants 12471194 and 12141105).

\end{document}